\newcommand{\RR}{\mathbb R}
\newcommand{\NN}{\mathbb N}
\renewcommand{\H}{\mathcal H}
\renewcommand{\L}{\mathcal L}
\newcommand{\eps}{\varepsilon}
\newcommand{\m}{\mathfrak m}
\renewcommand{\d}{\mathrm{d}}
\newcommand{\defeq}{:=}
\newcommand{\enclose}[1]{\left(#1\right)}
\renewcommand{\H}{\mathcal{H}}
\newcommand{\diam}{\mathrm{diam}}
\newtheorem{theorem}{Theorem}[section]
\newtheorem{proposition}[theorem]{Proposition}
\newtheorem{lemma}[theorem]{Lemma}
\newtheorem{corollary}[theorem]{Corollary}
\theoremstyle{definition}
\newtheorem{definition}[theorem]{Definition}
\theoremstyle{remark}
\newtheorem{remark}[theorem]{Remark}
\author[Novaga]{Matteo Novaga} 
\address[Matteo Novaga, Emanuele Paolini, Eugene Stepanov]{Dipartimento di Matematica, Universit\`a di Pisa, 
	Largo Bruno Pontecorvo 5 \\ I-56127, Pisa}
\address[Eugene Stepanov] {%
 St.Petersburg Branch of the Steklov Mathematical Institute of the Russian Academy of Sciences,
  St.Petersburg, Russia
 \and
  Department of Mathematical Physics, Faculty of Mathematics and Mechanics,
  St. Petersburg State University, Universitetskij pr.~28, Old Peterhof,
  198504 St.Petersburg, Russia
  \and ITMO University
  \and Faculty of Mathematics, Higher School of Economics, Moscow
}
\email[Matteo Novaga]{matteo.novaga@unipi.it}
\author[Paolini]{Emanuele Paolini} 
\email[Emanuele Paolini]{emanuele.paolini@unipi.it}
 \author[Stepanov]{Eugene Stepanov}
 \email[Eugene Stepanov]{stepanov.eugene@gmail.com}
\thanks{
 This research was partially supported by MUR Excellence Department Project 
 awarded to the Department of Mathematics of the University of Pisa.
 The work of M.N.\ was partially supported by Next Generation EU, Mission 4, Component 2, PRIN 2022E9CF89.
 The work of E.P.\ was partially supported by Next Generation EU, Mission 4, Component 2, PRIN 2022PJ9EFL,
 and by project PRA 2022 14 GeoDom (Università di Pisa).
 The work of E.S.\ is partially within the framework of HSE University Basic Research Program.
The authors are indebted to Francesco Nobili for suggesting a reference 
 that improved substantially the result of Lemma~\ref{lemma:PI}%
 }
\date{\today}
\title{On the total surface area of potato packings}
\begin{document}

\begin{abstract}
We prove that if we fill without gaps a bag
with infinitely many potatoes, in such a way that they touch each other in
few points, 
then the total surface area of the potatoes must be infinite.
In this context potatoes are measurable subsets of the Euclidean 
space, the bag is any open set of the same space.
As we show, this result also holds in the general context
of doubling (even locally) metric measure spaces satisfying Poincar\'e inequality,
in particular in smooth Riemannian manifolds and even in some sub-Riemannian spaces.
\end{abstract}

\maketitle

\tableofcontents

\section*{Introduction}

\begin{figure}
\begin{center}
\includegraphics[height=0.5\textwidth]{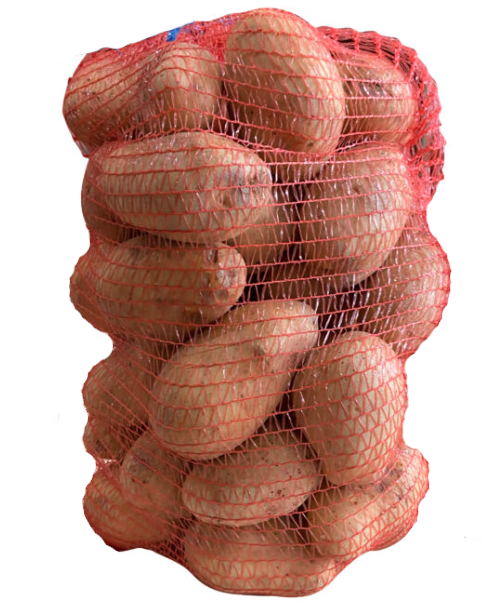}
\hfill
\includegraphics[height=0.5\textwidth]{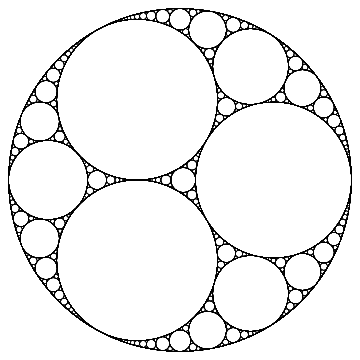}
\end{center}
\caption{On the left-hand side: a potato bag.
On the right-hand side: an Apollonian gasket.
}
\label{fig:bag}
\end{figure}

In this note we prove the following curious fact: 
if we fill without gaps a bag 
(modeled, say, by an arbitrary open connected subset 
of $\RR^d$) with potatoes 
(modeled, in this case, by open sets of finite perimeter),
in such a way that the potatoes touch each other in 
a single point (more generally in a set of zero surface measure), 
see Figure~\ref{fig:bag},
then the total surface area of the potatoes is infinite.
We show that this very simple result is in fact true for a 
fairly large class of metric measure spaces, where the perimeter 
of a set can be naturally defined.

This result implies in particular that the residual set 
(the complement of the union of the potatoes)
has Hausdorff dimension at least $d-1$.
Theorem~1.4(ii) from \cite{MaiNta22} shows that
this statement is sharp: in fact it proves the existence
of a packing of a planar convex set by planar strictly convex 
sets for which the dimension of the residual set is 
exactly $1$.

In this way we generalise the results of \cite{MaiNta22},
dedicated to packing of convex sets, 
which in their turn generalise those from the series of papers 
\cite{Lar66,Lar66b,Lar66c,Lar68}
and \cite{MikMik02}
dedicated to packings of spheres.

\section{Notation and preliminary results}

We will consider here metric measure spaces $(X,\d,\m)$
where $X$ is a nonempty set equipped with a distance $\d$ 
and a $\sigma$-finite Borel measure $\m$ with $\m(X)\neq 0$.
We say that a function $F$ defined on the borel sets of a metric space $X$
with values in $[0,+\infty]$
is a \emph{perimeter-like evaluation} if it satisfies the following properties:
\begin{enumerate}
  \item[(0)] $F(\emptyset)=0$;
  \item[(T)] $F(A) \ge \limsup_n F(A_n)$ whenever $A_n\subset A$ and $F(A\setminus A_n)\to 0$; 
  \item[(C)] $F(X\setminus A) = F(A)$;
  \item[(L)] $F(A)\le \liminf_n F(A_n)$ if $\lim_n \m(A_n\triangle A)=0$
  (one says that $F$ is lower semicontinuous with respect to $L^1(X,\m)$ convergence of 
  sets);
  \item[(Z)] if $\m(A \triangle B)=0$ then $F(A)=F(B)$.
\end{enumerate}

\begin{lemma}\label{lemma:T}
The property (T) is valid if, for some $c>0$, one has
\begin{enumerate}
  \item[(T')] 
  $F(A) \ge F(A\setminus B) - c F(B)$ whenever $B\subset A$.
\end{enumerate}
\end{lemma}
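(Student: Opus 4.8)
The plan is to obtain (T) as an immediate consequence of (T') via a well-chosen substitution. Fix a Borel set $A$ together with a sequence of Borel sets $A_n\subset A$ satisfying $F(A\setminus A_n)\to 0$; the goal is to show $F(A)\ge\limsup_n F(A_n)$.

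First I would apply (T') with the choice $B\defeq A\setminus A_n$, which is admissible since $B\subset A$. The key elementary computation is that, because $A_n\subset A$, one has $A\setminus B=A\setminus(A\setminus A_n)=A\cap A_n=A_n$. Hence (T') reads, for every $n$,
\[
F(A)\ge F(A\setminus B)-c\,F(B)=F(A_n)-c\,F(A\setminus A_n),
\]
equivalently $F(A_n)\le F(A)+c\,F(A\setminus A_n)$. Taking $\limsup$ in $n$ and using the hypothesis $F(A\setminus A_n)\to 0$ then yields $\limsup_n F(A_n)\le F(A)$, which is precisely (T).

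The only point that deserves a moment of care is the arithmetic with the value $+\infty$, in case $F(A_n)$ or $F(A\setminus A_n)$ is infinite for some $n$: then the rearrangement of the displayed inequality must be read with the usual conventions. However, the assumption $F(A\setminus A_n)\to 0$ forces $F(A\setminus A_n)<+\infty$ for all sufficiently large $n$, and since the $\limsup$ depends only on the tail of the sequence, the inequality $F(A_n)\le F(A)+c\,F(A\setminus A_n)$ is meaningful for those $n$ and the passage to the limit is legitimate. I do not anticipate any real obstacle: the substance of the lemma is simply the observation that (T') is a quantitative, single-step form of (T), with an error term $c\,F(B)$ that is harmless precisely in the regime in which (T) is invoked.
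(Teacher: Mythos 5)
Your proposal is correct and follows essentially the same route as the paper: apply (T') with $B=A\setminus A_n$, so that $F(A)\ge F(A_n)-c\,F(A\setminus A_n)$, then let $n\to\infty$ using $F(A\setminus A_n)\to 0$. The extra remark about handling $+\infty$ is a harmless refinement of the same argument.
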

\begin{proof}
Just notice that 
\begin{align*}
  F(A) &= F(A_n \cup (A\setminus A_n))\\
  &\ge F(A_n) - cF(A\setminus A_n) && \text{by $(T')$}
\end{align*}  
and if $F(A\setminus A_n)\to 0$, then we have (T).
\end{proof}

For a fairly general class of metric measure spaces, 
namely those with a doubling measure and satisfying 
the Poincar\'e inequality, 
a perimeter functional has been defined 
in \cite{Mir03,AmbMirPal04,BonPasRaj20}.
%
We refer to the monograph \cite{HeiKosShaTys15} and references therein for a detailed discussion
of such spaces.
Here we just recall the basic definitions. 
A measure ${\mathfrak m}$ is said \emph{doubling}, 
provided there is a $C_D\ge 1$ such that
\[
   {\mathfrak m}(B_{2r}(x))\le C_D \m(B_r(x)),\qquad \text{for all }x\in X,r \in (0,R). 
\]
We will shortly say that $(X,\d,{\mathfrak m})$ is a doubling space.

We say that a metric measure space $(X,\d,{\mathfrak m})$ supports a 
\emph{weak $(1,1)$ Poincar\'e} inequality,  
provided there is $\tau_P\ge 1$, and a constant $C_P>0$ such that, 
for any locally Lipschitz function $f\colon X\to \RR$, one has
\[
\fint_{B_r(x)}|f-f_{x,r}|\,d\m \le C_P r\fint_{B_{\tau_P r}(x)} {\rm Lip}(f)\,d\m,
  \qquad \text{for all }x \in X, r>0,
\]
where $f_{x,r}\defeq \fint_{B_r(x)} f\,d{\mathfrak m}$ 
and $Lip(f)$ stands for the Lipschitz constant of $f$ over $B_{\tau_P r}(x)$.
Actually, this is usually formulated 
with \emph{upper gradients} (see \cite{HeiKos98}), 
rather than with local Lipschitz constant. 
However, if $(X,\d,\m)$ is doubling, 
the two formulations are equivalent \cite{Kei03}.
\begin{definition}
A PI-space is a metric measure space $(X,\d,{\mathfrak m})$ that is doubling and supports 
a weak $(1,1)$-local Poincar\'e inequality. 
\end{definition}
The perimeter functional in a PI-space has been defined in \cite{Mir03,AmbMirPal04} (see also \cite{BonPasRaj20}) 
as a natural generalization of the classical Euclidean perimeter in $\RR^d$ (see \cite{Giu84}).
Note that, like in the classical Euclidean case $\RR^d$ where the perimeter 
is the $(d-1)$-dimensional Hausdorff measure of the 
essential boundary, also 
in PI-spaces the perimeter of a set $E$ 
is a measure of the essential boundary $\partial^e E$ of $E$,
defined in \cite{AmbMirPal04}, and is absolutely continuous with respect to the 
so-called codimension one Hausdorff measure $\H^{-1}$, 
with a Borel density $\theta_E\colon X\to [\alpha,\beta]$
where $\alpha$ and $\beta$ are two positive constants depending only on 
the constants $C_D$, $C_P$, $\tau_P$ of the space
\cite{BonPasRaj20}.
In particular, one has the integral representation
\[
  P(E) = \int_{\partial^e E} \theta_E(x)\, d\H^{-1}(x).
\]
We refer to \cite{BonPasRaj20,BreNobPas23} 
where this notion has been further studied.
It is important to know that this includes many classical examples,
like for instance
\begin{enumerate}
  \item the Euclidean space $X=\RR^d$ (or $X$ open subset of $\RR^d$) with the Lebesgue measure 
  (in this case $\H^{-1}$ is the classical $(d-1)$-dimensional Hausdorff measure and $\partial^e E$ 
  is equivalent to the reduced boundary of $E$ as defined by De Giorgi),
  \item $X$ a finite dimensional space equipped with any anisotropic norm and the Lebesgue measure,
  or even 
  some of the so-called $RCD$ metric measure spaces,
  \item $X$ a Heisenberg group of any dimension, or even some of the more general Carnot groups.
\end{enumerate}

\begin{lemma}\label{lemma:PI}
Let $(X,\d,\m)$ be a $PI$ space.
Then the perimeter functional $P$ 
is a perimeter-like evaluation, i.e.\ it satisfies 
all the properties (0), (T), (C), (L), and (Z).
\end{lemma}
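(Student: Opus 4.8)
The plan is to verify the five properties one by one, invoking the integral representation $P(E)=\int_{\partial^e E}\theta_E\,d\H^{-1}$ with density $\theta_E$ bounded between the structural constants $\alpha$ and $\beta$, together with the basic facts about perimeter in PI-spaces established in \cite{Mir03,AmbMirPal04,BonPasRaj20}. Property (0) is immediate since $\partial^e\emptyset=\emptyset$, hence $P(\emptyset)=0$. Property (C) follows because a set and its complement have the same essential boundary, $\partial^e(X\setminus A)=\partial^e A$, and the density is unchanged (in fact $\theta_{X\setminus A}=\theta_A$), so $P(X\setminus A)=P(A)$. Property (Z) is equally standard: if $\m(A\triangle B)=0$ then $A$ and $B$ have the same points of density $0$ and $1$, hence the same essential boundary and the same local approximation, giving $P(A)=P(B)$; this is already built into the fact that $P$ is defined on $\m$-equivalence classes. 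Property (L), lower semicontinuity of $P$ with respect to $L^1(X,\m)$ convergence of sets, is one of the foundational results on BV functions in PI-spaces and is proved in \cite{Mir03,AmbMirPal04}; I would simply cite it.

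The only property requiring real work is (T), and here I would use Lemma~\ref{lemma:T}: it suffices to establish the sub-additive-type estimate (T'), namely that there is $c>0$ with $P(A)\ge P(A\setminus B)-cP(B)$ whenever $B\subset A$. The natural route is the set-theoretic identity
\[
\partial^e(A\setminus B)\subset \partial^e A \cup \partial^e B,
\]
valid up to $\H^{-1}$-negligible sets (this is the analogue of the Euclidean fact that the reduced boundary of a difference is contained in the union of the reduced boundaries, and follows from the locality of density points). Integrating $\theta_{A\setminus B}\le\beta$ over $\partial^e(A\setminus B)$ and splitting the domain gives
\[
P(A\setminus B)=\int_{\partial^e(A\setminus B)}\theta_{A\setminus B}\,d\H^{-1}
\le \beta\,\H^{-1}\!\bigl(\partial^e A\bigr)+\beta\,\H^{-1}\!\bigl(\partial^e B\bigr)
\le \frac{\beta}{\alpha}\,P(A)+\frac{\beta}{\alpha}\,P(B),
\]
using $\theta\ge\alpha$ in the last step. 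Rearranging yields $P(A)\ge \tfrac{\alpha}{\beta}P(A\setminus B)-P(B)$, which is slightly weaker than (T') as stated because of the factor $\alpha/\beta<1$ in front of $P(A\setminus B)$; however, inspecting the proof of Lemma~\ref{lemma:T}, what is actually needed is only that $F(A_n)-cF(A\setminus A_n)\le F(A)$ after possibly adjusting the argument, or one re-runs the $\limsup$ computation directly: writing $A=A_n\cup(A\setminus A_n)$ and applying the above with $B=A\setminus A_n$ gives $P(A)\ge\tfrac{\alpha}{\beta}P(A_n)-P(A\setminus A_n)$, and since $P(A\setminus A_n)\to 0$ and $\alpha/\beta$ is a fixed positive constant, taking $\limsup_n$ yields $P(A)\ge\tfrac{\alpha}{\beta}\limsup_n P(A_n)$. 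This is not quite (T) on the nose, so the cleanest fix is to prove the sharp inclusion with the correct density bookkeeping — namely that on $\partial^e(A\setminus B)\cap\partial^e A$ one has $\theta_{A\setminus B}=\theta_A$ and on $\partial^e(A\setminus B)\cap\partial^e B$ one has $\theta_{A\setminus B}\le\theta_B$, which holds because at such boundary points the set $A\setminus B$ locally coincides with $A$, resp.\ its blow-up is controlled by that of $B$ — giving directly $P(A\setminus B)\le P(A)+P(B)$ and hence (T') with $c=1$.

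I expect the main obstacle to be exactly this density bookkeeping: asserting the inclusion $\partial^e(A\setminus B)\subset\partial^e A\cup\partial^e B$ is routine, but tracking that the perimeter \emph{density} $\theta_{A\setminus B}$ agrees with $\theta_A$ (respectively is dominated in terms of $\theta_B$) on the corresponding pieces of the essential boundary requires the locality of the perimeter measure in PI-spaces, i.e.\ the fact that $P(E;\cdot)$ as a measure depends only on the local behaviour of $E$ — a property established in \cite{AmbMirPal04,BonPasRaj20}. Granting that locality, the estimate $P(A\setminus B)\le P(A)+P(B)$ is immediate, (T') holds with $c=1$, and Lemma~\ref{lemma:T} then delivers (T), completing the verification of all five properties.
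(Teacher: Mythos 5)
Your handling of (0), (C), (Z), (L) is fine (these are indeed just citations to \cite{Mir03,AmbMirPal04,BonPasRaj20}; the paper quotes Proposition~1.7 of \cite{BonPasRaj20} for (Z), (L), (C)). The problem is (T), which is exactly where your argument has a genuine gap. What you prove rigorously from the representation $P(E)=\int_{\partial^e E}\theta_E\,d\H^{-1}$ with $\alpha\le\theta_E\le\beta$ and the inclusion $\partial^e(A\setminus B)\subset\partial^e A\cup\partial^e B$ is only $P(A\setminus B)\le\tfrac{\beta}{\alpha}\bigl(P(A)+P(B)\bigr)$, which gives $P(A)\ge\tfrac{\alpha}{\beta}\limsup_n P(A_n)$ and not property (T) as stated. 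Your proposed repair — that $\theta_{A\setminus B}=\theta_A$ $\H^{-1}$-a.e.\ on $\partial^e(A\setminus B)\cap\partial^e A$ and $\theta_{A\setminus B}\le\theta_B$ on $\partial^e(A\setminus B)\cap\partial^e B$ — is precisely the delicate pointwise (measure-theoretic) locality of the perimeter density, and it is not what the locality results in \cite{AmbMirPal04,BonPasRaj20} give you: those give locality of $P(E;\cdot)$ on \emph{open} sets where the sets coincide, whereas here $A\setminus B$ and $A$ coincide in no neighbourhood of a boundary point, only up to a set of vanishing density, and triple points of $\partial^e A\cap\partial^e B\cap\partial^e(A\setminus B)$ are not excluded a priori. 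So as written the key step is an unproven claim, acknowledged but not closed.

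The fix is much simpler and is the route the paper takes: (T') with $c=1$, i.e.\ $P(A\setminus B)\le P(A)+P(B)$ for $B\subset A$, follows directly from the submodularity-type inequality for perimeters in doubling spaces, $P(E\cup F)+P(E\cap F)\le P(E)+P(F)$ (equation (4.1) in \cite{Amb02}), applied with $E=A$ and $F=X\setminus B$, using (C) to replace $P(X\setminus B)$ by $P(B)$. This works at the level of total variations of BV functions and needs no integral representation or density bookkeeping at all; Lemma~\ref{lemma:T} then yields (T). (Incidentally, your weaker inequality with the factor $\alpha/\beta$ would in fact still suffice for the application in Theorem~\ref{th:main}, since there the $\limsup$ bound is only compared with $F(X)=0$; but it does not prove the lemma as stated.)
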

\begin{proof}
Property $(0)$ follows from $\partial^e \emptyset = \emptyset$.
Property $(T')$, with $c=1$, is shown in \cite[eq.~(4.1)]{Amb02}.
Proposition~1.7 (i), (ii), and (vi) in \cite{BonPasRaj20} shows
(Z), (L), and (C) respectively.
\end{proof}


%

\section{Main result}

We start with the following auxiliary statement.

\begin{proposition}
If $F$ satisfies properties (0), (C), and the family of sets 
$E_k$, $k\in \NN$ is such that 
$\bigcup E_i = X$,
and 
\[
   F\enclose{\bigcup_j E_j} \ge \sum_j F(E_j)
\]
then $F(E_i) = 0$ for all $i\in\NN$.

If, additionally, $F$ satisfies (Z) then the same holds true if 
$\m(X\setminus\bigcup E_i) = 0$
rather than $\bigcup E_i = X$.
\end{proposition}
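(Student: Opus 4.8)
The plan is to notice that the listed axioms already force $F(X)=0$, after which the conclusion drops out of the superadditivity hypothesis with essentially no further work.

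First I would apply (C) with $A=X$ to get $F(X)=F(X\setminus X)=F(\emptyset)$, and then invoke (0) to conclude $F(X)=0$. Since by assumption $\bigcup_j E_j = X$, this says precisely that $F\left(\bigcup_j E_j\right)=0$.

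Next I would substitute this into the standing inequality $F\left(\bigcup_j E_j\right)\ge \sum_j F(E_j)$, obtaining $0\ge \sum_j F(E_j)$. Because $F$ takes values in $[0,+\infty]$, every summand is nonnegative, so a series whose sum is $\le 0$ must be a series of zeros; hence $F(E_i)=0$ for every $i\in\NN$, which is the first assertion.

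For the second assertion the only modification is that $\bigcup_j E_j$ is no longer assumed to equal $X$ but only to satisfy $\m\left(X\setminus\bigcup_j E_j\right)=0$, equivalently $\m\left(X\,\triangle\,\bigcup_j E_j\right)=0$. Applying (Z) with $A=X$ and $B=\bigcup_j E_j$ then still yields $F\left(\bigcup_j E_j\right)=F(X)=0$, and the remainder of the argument is verbatim the same. I do not anticipate any genuine obstacle here; the only point that even nominally requires care is the manipulation of the possibly infinite nonnegative series $\sum_j F(E_j)$, and that is immediate since all its terms lie in $[0,+\infty]$.
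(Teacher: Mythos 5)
Your proof is correct and follows essentially the same route as the paper: properties (0) and (C) give $F(X)=0$, and the superadditivity hypothesis together with nonnegativity of $F$ forces each $F(E_i)=0$. In the second part you apply (Z) directly to the pair $X$ and $\bigcup_j E_j$, whereas the paper passes through $F\bigl(X\setminus\bigcup_j E_j\bigr)$ and then uses (C), but this is only a cosmetic difference.
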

\begin{proof}
  One has
\begin{align*}
  0 
  &= F(\emptyset) && \text{by (0)}\\
  &= F(X)  && \text{by (C)}\\
  &= F\enclose{\bigcup E_i} && \text{by assumption}\\
  &\ge \sum_i F(E_i) && \text{by assumption}
\end{align*}
hence $F(E_i)=0$ for all $i\in \NN$.

If, additionally, $F$ satisfies (Z),
and $\m(X\setminus\bigcup E_i) = 0$
one has
\begin{align*}
  0 
  &= F(\emptyset)=F(X)\\
  &= F\enclose{X \setminus \bigcup E_i} && \text{by (Z)}\\
  &= F\enclose{\bigcup E_j} && \text{by (C)}\\
  &\ge \sum F(E_j) && \text{by assumption}
\end{align*}
so that $F(E_j)=0$ for all $j$.
\end{proof}

The Theorem below is the main tecnical result which will be further 
translated into corollaries adapted to applications.

\begin{theorem}\label{th:main}
If $F$ satisfies properties (0), (C), (T), (L) and the family of sets 
$E_k$, $k\in \NN$ is such that 
$F(E_i\cup E_j) = F(E_i) + F(E_j)$
for $i\neq j$,
$\bigcup_{i=0}^{+\infty} E_i = X$,
and $\m\enclose{\bigcup_{i=1}^{+\infty} E_i} < +\infty$,
then 
either $\sum_{i=0}^{+\infty} F(E_i) = +\infty$ or
$F(E_i) = 0$ for all $i\in\NN$.

If, additionally, $F$ satisfies (Z) then the same holds true if 
$\m(X\setminus\bigcup_{i=0}^{+\infty} E_i) = 0$
for all $i\neq j$,
in place of $\bigcup E_i = X$.
\end{theorem}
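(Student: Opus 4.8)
The plan is to argue by contradiction: assume $\sum_{i=0}^\infty F(E_i)<+\infty$ and that some $F(E_{i_0})>0$, and derive a contradiction from the previous Proposition by verifying its hypothesis $F\bigl(\bigcup_j E_j\bigr)\ge\sum_j F(E_j)$. The additivity $F(E_i\cup E_j)=F(E_i)+F(E_j)$ for $i\neq j$ gives, by an easy induction, the finite additivity $F\bigl(\bigcup_{i=0}^N E_i\bigr)=\sum_{i=0}^N F(E_i)$ for every $N$; the induction step works because $\bigl(\bigcup_{i=0}^{N-1}E_i\bigr)$ and $E_N$ are two of the $E_i$'s only in spirit, so one actually needs to check that $F$ applied to the union of a finite union with one more set still splits — this should follow by iterating the two-set identity, but it is the first point to be careful about. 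Having the finite additivity, the natural idea is to pass to the limit $N\to\infty$ in $F\bigl(\bigcup_{i=0}^N E_i\bigr)=\sum_{i=0}^N F(E_i)$.

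For the passage to the limit, set $A\defeq\bigcup_{i=0}^\infty E_i$ and $A_N\defeq\bigcup_{i=0}^N E_i$. The right-hand side converges to $\sum_{i=0}^\infty F(E_i)$. For the left-hand side we want $F(A)\ge\limsup_N F(A_N)$, which is exactly property (T) \emph{provided} we can show $F(A\setminus A_N)\to 0$. Here $A\setminus A_N=\bigcup_{i=N+1}^\infty E_i$, and the key observation is that by the same finite-additivity argument (applied to any finite sub-collection) together with lower semicontinuity (L), one gets $F\bigl(\bigcup_{i=N+1}^\infty E_i\bigr)\le\liminf_M F\bigl(\bigcup_{i=N+1}^M E_i\bigr)$ once we know $\m\bigl(\bigl(\bigcup_{i=N+1}^M E_i\bigr)\triangle\bigl(\bigcup_{i=N+1}^\infty E_i\bigr)\bigr)\to0$; this measure-convergence holds because $\m\bigl(\bigcup_{i=1}^\infty E_i\bigr)<+\infty$, so the tails of a countable union have vanishing measure. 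Combining, $F(A\setminus A_N)\le\liminf_M\sum_{i=N+1}^M F(E_i)\le\sum_{i=N+1}^\infty F(E_i)$, and since $\sum_i F(E_i)<+\infty$ the right-hand side tends to $0$ as $N\to\infty$. Hence $F(A\setminus A_N)\to 0$.

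Now (T) gives $F(A)=F\bigl(\bigcup_j E_j\bigr)\ge\limsup_N F(A_N)=\limsup_N\sum_{i=0}^N F(E_i)=\sum_j F(E_j)$, which is precisely the hypothesis of the Proposition (whose remaining hypotheses (0), (C), and $\bigcup E_i=X$ we already have). The Proposition therefore forces $F(E_i)=0$ for all $i$, contradicting $F(E_{i_0})>0$. This completes the first part. For the second part, where we only assume $\m\bigl(X\setminus\bigcup_i E_i\bigr)=0$ and additionally have (Z), the argument is identical except that the final appeal is to the second clause of the Proposition: we still obtain $F\bigl(\bigcup_j E_j\bigr)\ge\sum_j F(E_j)$ by the same limiting argument, and then invoke the (Z)-version of the Proposition to conclude $F(E_i)=0$ for all $i$.

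The step I expect to be the main obstacle is the verification that $F(A\setminus A_N)\to 0$: it is the only place where all four properties interact, and it requires chaining finite additivity, lower semicontinuity (L), and the finiteness $\m\bigl(\bigcup_{i\ge1}E_i\bigr)<+\infty$ in the right order. In particular one must be slightly careful that $E_0$ may have infinite measure (only $\bigcup_{i\ge1}E_i$ is assumed of finite measure), so the measure-theoretic tail estimate should be applied to $\bigcup_{i=N+1}^\infty E_i$ with $N\ge0$, which for $N\ge1$ is a tail of a finite-measure set — hence one may simply run the whole convergence argument starting from index $N\ge1$, which is harmless since it is the limit $N\to\infty$ that matters.
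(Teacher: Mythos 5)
Your argument is essentially the paper's own proof: you introduce the tail unions $T_N=\bigcup_{i>N}E_i$, use the finiteness of $\m\bigl(\bigcup_{i\ge1}E_i\bigr)$ plus (L) and finite additivity to bound $F(T_N)$ by the tail $\sum_{i>N}F(E_i)$ of the (assumed convergent) series, then invoke (T) to get $F\bigl(\bigcup_i E_i\bigr)\ge\sum_i F(E_i)$ and conclude via the Proposition, exactly as in the paper. The two points you flag as delicate (upgrading the pairwise identity to finite additivity over unions, and identifying $A\setminus A_N$ with the tail union when the $E_i$ may overlap) are treated with the same level of implicitness in the paper's proof, so your proposal matches it in both substance and rigor.
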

\begin{proof}
Let 
\[
  T_n\defeq \bigcup_{i=n+1}^{+\infty} E_j, \qquad
  T_n^m\defeq \bigcup_{i=n+1}^{m} E_j.
\]
Clearly $T_n^m \nearrow T_n$ as $m\to +\infty$, 
hence 
$\lim_m \m(T_n^m) = \m(T_n)$. 
Since $\m(T_n)\le \m\enclose{\bigcup_{i=1}^{+\infty} E_i}<+\infty$
then we have that $\m(T_n^m \triangle T_n)\to 0$ 
as $m\to+\infty$.
Therefore
\begin{equation}\label{eq:48972345}
\begin{aligned}
  F(T_n) &\le \liminf_m F(T_n^m) && \text{by (L)}\\
    &= \liminf_m \sum_{i=n+1}^m F(E_i) &&\text{by assumption}\\
    &= \sum_{i=n+1}^{+\infty} F(E_i).
\end{aligned}
\end{equation}
If $\sum_{i=1}^{+\infty} F(E_i)=+\infty$ the proof is concluded.
Otherwise the righthand side of \eqref{eq:48972345} is the remainder 
of a convergent number series and hence $F(T_n)\to 0$ as $n\to +\infty$.
We have then
\begin{align*}
  F\enclose{\bigcup_{i=0}^{+\infty} E_i}
  &\ge \limsup_n F\enclose{\bigcup_{i=0}^n E_i} && \text{by (T)}\\ 
  &= \lim_n \sum_{i=0}^n F(E_i) && \text{by assumption}\\
  &= \sum_{i=0}^{+\infty} F(E_i).
\end{align*}
The proof is then concluded using the previous proposition.
\end{proof}

\section{Some applications}

Combining Theorem~\ref{th:main} with Lemma~\ref{lemma:PI} we obtain the following result.

\begin{theorem}\label{th:1}
  Let $(X,\d,\m)$ be a $PI$ space.
  Let $E_k$, $k\in \NN$, be a sequence
  of Borel subsets of $X$ 
  such that 
  \begin{enumerate}
      \item[(i)] $\m(E_k \cap E_j)=0$ for $k\neq j$;
      \item[(ii)] $\m\enclose{X \setminus \bigcup E_k}=0$;
      \item[(iii)] $\H^{-1}(\partial^e E_k \cap \partial^e E_j )=0$ for all $k\neq j$.
      \item[(iv)] $\m(E_0)>0$ and $\m (E_1)>0$.
  \end{enumerate}
  Then 
  \[
      \sum_k P(E_k) = +\infty.
  \]
\end{theorem}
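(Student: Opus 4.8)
The statement follows by applying Theorem~\ref{th:main} to the perimeter functional $F=P$, so the bulk of the work is to verify the hypotheses of that theorem from the hypotheses (i)--(iv) of Theorem~\ref{th:1}. By Lemma~\ref{lemma:PI} we already know that $P$ satisfies (0), (C), (T), (L) and (Z). What remains is to check the additivity condition $P(E_i\cup E_j)=P(E_i)+P(E_j)$ for $i\neq j$, the finiteness condition $\m\bigl(\bigcup_{i=1}^{+\infty}E_i\bigr)<+\infty$, and to rule out the trivial alternative ``$P(E_k)=0$ for all $k$'' in the conclusion of Theorem~\ref{th:main}.

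First I would establish the additivity of $P$ along the family $\{E_k\}$. The integral representation $P(E)=\int_{\partial^e E}\theta_E\,\d\H^{-1}$ from \cite{BonPasRaj20} is the right tool: I expect that $\partial^e(E_i\cup E_j)$ differs from $(\partial^e E_i\cup\partial^e E_j)\setminus(\partial^e E_i\cap\partial^e E_j)$ only by an $\H^{-1}$-null set, using the measure-disjointness (i) together with (iii). More precisely, away from the shared boundary points a point of $\partial^e(E_i\cup E_j)$ has density $1$ or $0$ in one of the two sets and density $0$ in the other, so it lies in exactly one of $\partial^e E_i$, $\partial^e E_j$ and with the same density $\theta$; conversely a point of $\partial^e E_i$ not in $\partial^e E_j$ is a boundary point of the union. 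Hypothesis (iii) guarantees $\H^{-1}(\partial^e E_i\cap\partial^e E_j)=0$, so the overlap contributes nothing, and the integral splits: $P(E_i\cup E_j)=P(E_i)+P(E_j)$. I would isolate this density/essential-boundary bookkeeping as the main technical point — it is the step most likely to need care, since one must argue that no boundary mass is created or destroyed at the interface, and one should double-check whether (i) alone suffices or whether (iii) is genuinely used here (I believe (iii) is exactly what kills the interface term).

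For the finiteness condition, the sets $E_1,E_2,\dots$ are essentially pairwise disjoint by (i), so $\m\bigl(\bigcup_{i\ge1}E_i\bigr)=\sum_{i\ge1}\m(E_i)\le\m(X\setminus E_0)+\m(\text{null})$, and since by (ii) the union of all $E_k$ is $\m$-almost all of $X$ and $E_0$ carries positive measure by (iv), this is at most $\m(X)-\m(E_0)$... except that $\m(X)$ need not be finite. Here I would reindex: the role of the distinguished index $0$ in Theorem~\ref{th:main} can be played by any of the $E_k$, so I would instead note that the genuinely needed input is only that \emph{some} tail $\bigcup_{i\ge n+1}E_i$ has finite measure, and in fact re-examine the proof of Theorem~\ref{th:main} — actually, rereading it, the finiteness $\m\bigl(\bigcup_{i\ge1}E_i\bigr)<\infty$ is used only to get $\m(T_n^m\triangle T_n)\to0$, and for that it suffices that $\m(T_0)<\infty$. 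Hmm — this points to a possible gap, so the safe route is: if $\m(X)=\infty$, pass to a large ball $B$ with $\m(B)<\infty$ containing $E_0$ and $E_1$ (using $\sigma$-finiteness and that balls have finite measure in a doubling space), replace $X$ by $B$ and $E_k$ by $E_k\cap B$; the perimeter of the pieces only changes by the perimeter of $B$, which is finite for a suitable radius by the coarea/Poincaré argument, and additivity is preserved since intersecting with a fixed set does not affect essential-boundary disjointness. Then $\m\bigl(\bigcup_{i\ge1}(E_i\cap B)\bigr)\le\m(B)<\infty$.

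Finally, to exclude the alternative $P(E_k)=0$ for all $k$: if $P(E_0)=0$ and $P(E_1)=0$ then, $P$ being a perimeter on a PI-space, a set of zero perimeter and positive measure whose complement also has positive measure cannot exist when $X$ is connected — or, more robustly, $P(E)=0$ forces $\m(E)\in\{0,\m(X)\}$ on a connected PI-space (this is the standard ``sets of finite perimeter zero are trivial'' fact, e.g.\ via the Poincaré inequality). Since (iv) gives $\m(E_0)>0$ and $\m(E_1)>0$, and (i) gives $\m(E_0\cap E_1)=0$, we cannot have both equal to $\m(X)$, hence at least one of $P(E_0),P(E_1)$ is positive, so the alternative in Theorem~\ref{th:main} is impossible and we conclude $\sum_k P(E_k)=+\infty$. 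The connectedness of $X$ is not in the hypotheses, so I would instead phrase this last step as: were $\sum_k P(E_k)<\infty$, Theorem~\ref{th:main} would give $P(E_k)=0$ for every $k$; but then by (C) and the additivity applied to the finite unions one shows $P$ vanishes on a generating algebra, forcing (via (L) and the integral representation) $P\equiv0$, contradicting the positivity of the perimeter of, say, a suitable ball separating $E_0$ from the rest — I expect the cleanest contradiction to come directly from $\m(E_0),\m(E_1)>0$ and the triviality of zero-perimeter sets, and I would keep the argument at that level.
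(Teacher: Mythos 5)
Your proposal follows essentially the same route as the paper: verify the hypotheses of Theorem~\ref{th:main} for $F=P$ via Lemma~\ref{lemma:PI}, get the pairwise additivity $P(E_i\cup E_j)=P(E_i)+P(E_j)$ from (i) and (iii), and exclude the alternative ``$P(E_k)=0$ for all $k$'' using (iv). For the additivity the paper simply cites Lemma~2.3(ii) of \cite{BonPasRaj20}; your density/essential-boundary bookkeeping is precisely the content of that lemma, so the idea is right, though as written it is a sketch rather than a proof. For the exclusion step the paper invokes the relative isoperimetric inequality of \cite{Mir03}, which is also your primary argument; the worry about connectedness is unnecessary (a doubling space supporting a Poincar\'e inequality is quasiconvex, hence connected), and your fallback ``generating algebra'' argument should be dropped: knowing $P(E_k)=0$ for all $k$ does not let you propagate vanishing of $P$ to other Borel sets, so that variant would not work as stated.

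The one point where you genuinely depart from the paper is the hypothesis $\m\bigl(\bigcup_{i\ge 1}E_i\bigr)<+\infty$ of Theorem~\ref{th:main}, which you correctly note is not guaranteed by (i)--(iv) when $\m(X)=+\infty$; the paper's proof is silent on this. Your patch, however, has a gap at the level of generality claimed: you replace $X$ by a metric ball $B$ of finite measure and treat $(B,\d,\m)$ as a PI space, but in a general PI space a ball need not support a Poincar\'e inequality, so ``restrict to $B$'' is not automatically legitimate (it is in $\RR^d$, where balls are John domains, and indeed this is exactly the localization the paper performs later, in Corollary~\ref{co:2}, where $X$ is taken to be a Euclidean ball). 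Also one cannot ask for a ball \emph{containing} $E_0$ and $E_1$, which may be unbounded; it suffices that the ball meet each of them in positive measure. A cleaner repair is either to read the theorem with the finiteness assumption added (it is harmless for the intended applications), or to run the abstract argument with the relative perimeter $P(\cdot,B)$, whose lower semicontinuity only requires convergence in measure on $B$, so that $\m(B)<+\infty$ suffices in place of the global finiteness; your observation that the interface terms on $\partial^e B$ are controlled by (iii) (any common essential-boundary point of $E_i\cap B$ and $E_j\cap B$ already lies in $\partial^e E_i\cap\partial^e E_j$) is correct and is what makes such a localization work.
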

\begin{proof}
  Conditions (i) and (iii) imply
  \[
    P(E_k\cup E_j) = P(E_k) + P(E_j)
  \]
  in view of Lemma~2.3(ii) in \cite{BonPasRaj20}.
  The assumptions of Theorem~\ref{th:main} are therefore satisfied by $P$ 
  in view of Lemma~\ref{lemma:PI}.
  Hence by Theorem~\ref{th:main} we conclude that either $\sum P(E_k)=+\infty$
  or $P(E_k)=0$ for all $k$.
  But the latter option is excluded by (iv) in view of the relative isoperimetric 
  inequality for $PI$ spaces, provided in \cite{Mir03}.
\end{proof}

\begin{corollary}\label{co:2}
Let $\m$ be the Lebesgue measure on $\RR^d$.
Let $\mathcal F$ be a family of at least two 
measurable nonempty subsets of $\RR^d$, 
each with positive volume $\m$ and kissing each other 
on sets of zero $\H^{d-1}$ measure, i.e. 
\begin{equation}\label{eq:0967}
  \H^{d-1}(\bar A\cap \bar B)=0, \qquad \text{for every}\ A,B\in \mathcal F, A\neq B.
\end{equation}
In particular this assumption is satisfied 
if the sets in $\mathcal F$ are strictly convex and their interiors 
are not intersecting.

Let $B\subset \RR^d$ be an open ball.
If 
$\sum_{E\in \mathcal F} \H^{d-1}(\partial E\cap B)<+\infty$,
then either there exists a set $E\in \mathcal F$ such that
\[
  \m (B\setminus E)=0
\]
or
\[
  \m \enclose{B \setminus \bigcup_{E\in \mathcal F} E}>0,
\]
i.e.\ there is a subset of $B$ with positive 
measure which is not covered by the union of $\mathcal F$.

In particular if $\Omega\subset \RR^d$ is an open set 
and all $E \in \mathcal F$ are open nonempty subsets of $\Omega$,
regular in the sense that they coincide with the interior of their closure,
satisfying~\eqref{eq:0967} and
\begin{equation}\label{eq:m}
\m\left(\Omega\setminus \bigcup_{E\in \mathcal F} E\right)=0,
\end{equation}
then given any $x\in \Omega \cap \bigcup_{E\in \mathcal F} \partial E$ 
and $B$ an open ball centered at $x$, one has
\begin{equation}\label{eq:68}
  \sum_{E\in \mathcal F} \H^{d-1}(\partial E\cap B)=+\infty.
\end{equation}
If, moreover, $\Omega$ is connected, this implies in particular
\begin{equation}\label{eq:685}
  \sum_{E\in \mathcal F} \H^{d-1}(\partial E\cap \Omega)=+\infty.
\end{equation}
\end{corollary}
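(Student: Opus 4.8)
The plan is to derive the entire statement from Theorem~\ref{th:1} applied to Euclidean balls, which --- being bounded convex open sets --- are $PI$ spaces with $\H^{-1}=\H^{d-1}$. A preliminary observation is that $\mathcal F$ is automatically \emph{countable}: by~\eqref{eq:0967}, $\m(A\cap B)\le\H^{d-1}(\overline{A}\cap\overline{B})=0$ for distinct $A,B\in\mathcal F$ (a set of finite $\H^{d-1}$ measure is Lebesgue negligible), so the members of $\mathcal F$ are pairwise $\m$-essentially disjoint; hence only countably many of them can meet a fixed ball $B(0,n)$ in positive measure, while each member (having positive volume) meets some $B(0,n)$ in positive measure. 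Enumerate $\mathcal F=\{E_0,E_1,E_2,\dots\}$. I also recall that $\partial^e E\subseteq\partial E$ for every measurable $E$, and that for a set of finite perimeter the perimeter inside an open ball $B$ equals $\H^{d-1}(\partial^e E\cap B)\le\H^{d-1}(\partial E\cap B)$.

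\emph{First assertion.} Assume $\sum_{E\in\mathcal F}\H^{d-1}(\partial E\cap B)<+\infty$ and, seeking a contradiction, that \emph{both} alternatives fail: $\m(B\setminus E)>0$ for every $E\in\mathcal F$, and $\m(B\setminus\bigcup_{E\in\mathcal F}E)=0$. I would apply Theorem~\ref{th:1} in the $PI$ space $X=B$ to the sequence $E_k\cap B$ (replacing each $E_k$ by a Borel representative, which changes neither $\partial^e E_k$ nor the perimeter, and padding with $\emptyset$ if $\mathcal F$ is finite). Hypothesis~(i) holds by essential disjointness; (iii) holds because $\partial^e(E_k\cap B)\cap\partial^e(E_j\cap B)\cap B\subseteq\overline{E_k}\cap\overline{E_j}$ and~\eqref{eq:0967}; (ii) is precisely the second failed alternative. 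As for~(iv): $\bigcup_k(E_k\cap B)$ has full measure in $B$, is a countable essentially disjoint union, and by the first failed alternative no $E_k\cap B$ has full measure in $B$, so at least two of the $E_k\cap B$ have positive measure, and we relabel these as $E_0,E_1$. Theorem~\ref{th:1} then yields $\sum_k P(E_k\cap B)=+\infty$ in $X=B$, that is $\sum_k\H^{d-1}(\partial^e E_k\cap B)=+\infty$, and therefore $\sum_k\H^{d-1}(\partial E_k\cap B)=+\infty$ --- a contradiction. Hence one of the two alternatives must hold.

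\emph{The ``in particular'' clause.} Fix $x\in\Omega\cap\bigcup_{E\in\mathcal F}\partial E$, say $x\in\partial E_1$, and let $B$ be an open ball centered at $x$. Since the sum in~\eqref{eq:68} only increases with $B$, it suffices to prove~\eqref{eq:68} for balls $B\subset\Omega$; for such $B$ one has $\m(B\setminus\bigcup_{E}E)\le\m(\Omega\setminus\bigcup_{E}E)=0$ by~\eqref{eq:m}, so the second alternative of the first assertion is ruled out. If~\eqref{eq:68} failed, the first assertion would then provide $E_0\in\mathcal F$ with $\m(B\setminus E_0)=0$; but $B\setminus E_0$ is relatively closed in $B$ and Lebesgue-null, hence relatively nowhere dense, so $E_0\cap B$ is open and dense in $B$, whence $\overline{B}\subseteq\overline{E_0}$ and, taking interiors, $B\subseteq\mathrm{int}\,\overline{E_0}=E_0$ by the regularity of $E_0$. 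In particular $x\in E_0$, so $E_0\ne E_1$ (since $E_0$ is open while $x\in\partial E_1$), and a ball around $x$ contained in $E_0$ meets $E_1$ (because $x\in\overline{E_1}$); thus $E_0\cap E_1$ is a nonempty open set and $\m(E_0\cap E_1)>0$, contradicting~\eqref{eq:0967}. Therefore~\eqref{eq:68} holds. If moreover $\Omega$ is connected, then $\Omega\cap\bigcup_{E}\partial E\ne\emptyset$: otherwise each $E\in\mathcal F$, being open with $E\subseteq\Omega$ and $\Omega\cap\partial E=\emptyset$, would be a nonempty subset of $\Omega$ that is both open and relatively closed in $\Omega$, hence equal to $\Omega$ --- impossible, since $\mathcal F$ has at least two distinct elements. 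Picking $x$ in $\Omega\cap\bigcup_{E}\partial E$ and a ball $B\subset\Omega$ centered there, \eqref{eq:685} follows from~\eqref{eq:68} and $\partial E\cap B\subseteq\partial E\cap\Omega$.

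\emph{Main obstacle.} The passage between the relative perimeter and $\H^{d-1}(\partial E\cap B)$, and the countability of $\mathcal F$, are routine though indispensable. The step I expect to be the genuine difficulty is the exclusion of the possibility ``$\m(B\setminus E_0)=0$'' in the ``in particular'' clause: that is exactly where the regularity hypothesis (each $E$ equals the interior of its closure) is essential, as it prevents $E_0$ from being, e.g., a punctured or slit open set whose closure covers $B$ even though the center $x$ of $B$ lies outside $E_0$.
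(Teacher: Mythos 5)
Your proposal is correct and follows essentially the same route as the paper: enumerate the countable family, apply Theorem~\ref{th:1} in the $PI$ space $X=B$ to the sets $E_k\cap B$ (using \eqref{eq:0967} to verify essential disjointness and $\H^{d-1}$-negligible common essential boundaries, and bounding the perimeter by $\H^{d-1}(\partial E\cap B)$), and then reduce the ``in particular'' clause to a smaller ball $B'\subset\Omega$ centered at $x$, ruling out the alternative $\m(B'\setminus E)=0$ via the regularity hypothesis and the disjointness of the family. Your exclusion of that alternative is phrased contrapositively (density of $E_0\cap B$ forces $B\subseteq\mathrm{int}\,\overline{E_0}=E_0$) while the paper argues directly that $\partial E\cap B'\neq\emptyset$ yields a nonempty open subset of $B'\setminus E$, but this is the same use of regularity and does not change the argument.
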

\begin{proof}
We apply Theorem~\ref{th:1} with $X\defeq B$, equipped with the Euclidean distance,
and $P$ is the usual Euclidean (Caccioppoli) perimeter relative to $B$.
Clearly, the doubling condition holds for $B$ and,
since $B$ is connected, 
the Poincar\'e inequality is satisfied and $X=B$ is a $PI$ space.
Then, the De Giorgi reduced boundary $\partial^*E$ of a Borel set $E\subset \RR^d$ 
satisfies $\partial^* E \subset \partial^e E$ and 
$\H^{d-1}(\partial^e E) = \H^{d-1}(\partial^* E)$.
Notice that $\H^{d-1}$ coincides with the spherical Hausdorff 
measure $\mathcal S^{d-1}$ on rectifiable sets, and
(see the Example ``weighted spaces'' in Section~7 of \cite{AmbMirPal04})
$\H^{d-1}$ coincides, up to a multiplicative constant, with $\mathcal S^{d-1}$.

If there exists $E\in \mathcal F$ such that $\m(B\setminus E)=0$ or, 
if $\m(B\setminus \bigcup_{E \in \mathcal F} E)>0$ there is nothing to prove.
Otherwise there should be at least two different sets $E_0$, $E_1$ in $\mathcal F$  
such that $\m(B\cap E_0)>0$ and $\m(B\cap E_1)>0$.
Enumerate now all the sets 
of $\mathcal F$ as $E_k$, $k\in \NN$
(clearly $\mathcal F$ is at most countable since each set is assumed to have positive measure,
and in the case that $\mathcal F$ is finite we can complete the sequence 
with empty sets).
Notice that $\H^{d-1}(\partial^e E_k\cap \partial^e E_j)=0$ for all $k\neq j$
and $\m(E_k\cap E_j)=0$ since $\H^{d-1}(\bar E_k \cap \bar E_j)=0$.

Therefore, we can apply Theorem~\ref{th:1}
to the sequence $E_k\cap B$ to get
$\sum_k P(E_k, B)=+\infty$
hence 
\[
\sum_k \H^{d-1}(\partial E_k\cap B)
\ge 
\sum_k \H^{d-1}(\partial^* E_k\cap B)
= \sum_k P(E_k,B) = +\infty,
\]
showing the first claim.

In the particular case when $\m(\Omega\setminus \bigcup_{E\in \mathcal F} E)=0$
for some open $\Omega\subset \RR^d$, taking $x$ and $B$ as in the statement,
we consider a ball $B'\subset B$ centered at $x$ such that 
$B'\subset \Omega$.
Then, for every $E\in \mathcal F$ either $E\cap B'=\emptyset$ or
$E\cap B'\neq \emptyset$. 
In the latter case
one has $\partial E\cap B'\neq \emptyset$ since 
otherwise we would have $B'\subset E$ which contradicts the choice of the center $x$
and the fact that all the sets in $\mathcal F$ are disjoint.
The regularity assumption implies that the interior of $B'\setminus E$ is nonempty,
hence $\m(B'\setminus E)>0$.
Therefore we have shown that $\m(B'\setminus E)>0$ for every $E\in \mathcal F$ and hence 
by the previous claim with $B'$ in place of $B$, one has
\[
\sum_{E\in \mathcal F} \H^{d-1}(\partial E\cap B)\ge
\sum_{E\in \mathcal F} \H^{d-1}(\partial E\cap B')=+\infty
\]
as claimed.
If $\Omega$ is also connected, then since $\mathcal F$ has at least two elements,
we obtain that $\bigcup_{E\in \mathcal F} \partial E\cap \Omega$ is not empty.
Finally, from~\eqref{eq:68} one has 
\[
  \sum_{E\in \mathcal F} \H^{d-1}(\partial E\cap\Omega) = +\infty,
\]
proving the last claim.
\end{proof}

\begin{remark}
  It is clear from the proof that the statement of Corollary~\ref{co:2} 
  holds under the slightly weaker assumption that
  $\H^{d-1}(\partial^* A\cap \partial^* B)=0$ for all $A,B\in \mathcal F$
  instead of $\H^{d-1}(\bar A\cap \bar B)=0$,
  where $\partial^*$ stands for the reduced boundary in the sense of De Giorgi.
\end{remark}

\begin{remark}
Corollary~\ref{co:2} is valid in any metric measure space
$(X,\d,\m)$
satisfying the Poincar\'e inequality where for every $x\in X$ 
there is an open ball $U\subset X$ containing $x$ 
such that $(U,\d,\m)$ is doubling.
In particular, it is valid in any $C^2$ smooth Riemannian manifold.
Indeed it is enough to rewrite word-to-word the proof of 
Corollary~\ref{co:2} with balls $B\subset U$.
In the case when $X$ a $C^2$ smooth Riemannian manifold
it is enough to note that over every ball $U$ the curvature 
of $X$ is bounded hence $(U,\d,\m)$ is doubling
(in fact it is enough for this purpose that the curvature be bounded from below).
\end{remark}

\begin{remark}\label{remrem}
An alternative proof of Corollary~\ref{co:2} for a family $\mathcal F$ 
of strictly convex sets could proceed as follows:
\begin{itemize}
  \item
For the planar case $d=2$ one uses the coarea inequality
to state that if the total perimeter of the sets 
in the family is finite 
then almost every line (in fact every except a countable number of lines), say horizontal, intersects the
boundary of the sets in a set of finite $\H^{0}$ measure 
(i.e. in a finite set).
On the other hand if the sets of $\mathcal F$ 
are assumed to be strictly convex then
the lines intersecting only a finite number of sets
in the family should be at most countable:
in fact, the intersection of 
a convex set with a line is a line segment, 
and hence if the line intersects only a finite number of sets, 
then each point of its intersection with the boundary of 
some set of $\mathcal F$ is a point of 
intersection between two different sets of $\mathcal F$, 
which are countably many in total. This contradiction shows that 
the total perimeter of the sets in the family is infinite
for planar packing of strictly convex sets (even in the case 
when the ambient set is not convex). 
\item For the general
space dimension $d\geq 2$, again assuming that the total perimeter
of the packing is finite, by coarea inequality we have 
that the intersection of almost every hyperplane, say, horizontal, 
with the boundary of the sets in the family, has finite $\H^{d-2}$ measure. 
If the ambient set is convex, then inside every such hyperplane
we have a packing of a convex set by strictly convex sets. 
Proceeding by backward induction on the dimension we arrive at a contradiction.
Convexity of the ambient set is clearly essential for the argument to work in
the case $d > 2$.
\end{itemize}
\end{remark}

\begin{remark}
Under the assumptions of Corollary~\ref{co:2}, if $\Omega$ is connected and every set in $\mathcal F$ is regular in the sense of this Corollary,
reasoning as in Remark~\ref{remrem} one can prove that
\begin{equation}\label{eq:69}
\sum_{E\in \mathcal F} (\diam E)^{d-1} = +\infty,
\end{equation}
where $\diam E$ denotes the diameter of $E$.
Indeed, since $\Omega$ is connected,
we can choose a ball $B\subset \Omega$ such that $V:=\bigcup_{E\in \mathcal F} \partial E\cap B$ is nonempty.
Notice  that, if $x\in \partial E$ for some $E\in\mathcal F$, then every neighborhood of $x$ intersects
at least two sets in $\mathcal F$. 
Therefore, 
up to a suitable rotation,
we can assume that there exists a set $L_1$ of positive measure of lines $\ell$ parallel to the first coordinate direction $e_1$, which intersect in $B$ at least two sets of $\mathcal F$ (hence they also intersect the set $V$).
Here by measure of the set of lines $L_1$ we mean the 
$\L^{d-1}$ measure of its orthogonal projections on the hyperplane $e_1^\perp$ perpendicular to $e_1$.
In particular $\L^{d-1}\left(p_1\left(V\right)\right) >0$,
where $p_1$ denotes the orthogonal projection on $e_1^\perp$.
Since 
\[
  \L^{d-1}(p_1(\bar E))
    \le \omega_{d-1}\diam (p_1(\bar E))^{d-1}
    \le \omega_{d-1}(\diam \bar E)^{d-1}
    = \omega_{d-1}(\diam E)^{d-1},
\]
if \eqref{eq:69} is not satisfied, then, for every $\eps>0$, there is a cofinite subfamily $\mathcal F_\eps\subset\mathcal F$
such that 
\[
\sum_{E\in \mathcal F_\eps}\L^{d-1}(p_1(\bar E)) <\eps.
\]
This implies that there exists a set $L_2\subset L_1$ of positive measure of lines $\ell$ which
do not intersect the closures of the sets in $\mathcal F_\eps$, but  intersect at least two sets in $\mathcal F\setminus\mathcal F_\eps$.
Notice that  
$\ell\cap B\subset \bigcup_{E\in \mathcal F\setminus\mathcal F_\eps} \bar E$
for almost every line $\ell$ parallel to $e_1$, since otherwise, by Fubini Theorem, the volume of $B\setminus \bigcup_{E\in \mathcal F} \bar E$ would be strictly positive, contradicting~\eqref{eq:m}.
As a consequence, for almost every line $\ell\in L_2$, since $\ell\cap B$ is connected,
there exist two sets $U, W$ in $\mathcal F\setminus\mathcal F_\eps$, 
possibly depending on $\ell$, such that $\ell \cap B \cap \partial U\cap \partial W$ is nonempty (otherwise $\ell\cap B$ would be a union of a finite disjoint family of relatively closed sets).
Finally, since the family $\mathcal F\setminus\mathcal F_\eps$ is finite,
we can find two sets $E_1, E_2$ in $\mathcal F\setminus\mathcal F_\eps$ and a subset $L_3\subset L_2$ of positive measure such that
$\ell \cap B \cap \partial E_1\cap \partial E_2$ is nonempty for every $\ell\in L_3$, contradicting~\eqref{eq:0967}.
\end{remark}

\bibliographystyle{amsplain}
\providecommand{\bysame}{\leavevmode\hbox to3em{\hrulefill}\thinspace}
\providecommand{\MR}{\relax\ifhmode\unskip\space\fi MR }
\providecommand{\MRhref}[2]{%
  \href{http://www.ams.org/mathscinet-getitem?mr=#1}{#2}
}
\providecommand{\href}[2]{#2}

\end{document}